\documentclass[11pt,reqno]{amsart}
\usepackage[foot]{amsaddr}

\usepackage{amssymb,a4wide}

\usepackage{fullpage}
\usepackage{pxfonts}

\usepackage[shortlabels]{enumitem}
\usepackage{xcolor,xspace}

\usepackage{hyperref,doi}

\newcommand{\cal}{\mathcal}
\newcommand{\coeff}[1]{\mathop{[\![#1]\!]}\xspace}



\theoremstyle{thmstyleone}%
\newtheorem{theorem}{Theorem}
\newtheorem{lemma}[theorem]{Lemma}%

\theoremstyle{thmstyletwo}%

\theoremstyle{thmstylethree}%

\title[Sums of half-integer powers of consecutive integers]{Generalization of Ramanujan's formula for sums of half-integer powers of consecutive integers via formal Bernoulli series}
\author{Max A. Alekseyev}
\address[MAA]{The George Washington University, Washington, DC, USA.}
\author{Rafael Gonzalez}
\author{Keryn Loor}
\author{Aviad Susman}
\author{Cesar Valverde}
\address[RG]{Lehman College of the City University of New York, Bronx, NY, USA}
\address[AS]{Icahn School of Medicine at Mount Sinai, New York, NY, USA}
\address[KL, CV]{Medgar Evers College of the City University of New York, Brooklyn NY, USA}

\keywords{Faulhaber's formula, Bernoulli polynomials, Chebyshev polynomials, Catalan numbers}

\subjclass[2022]{Primary: 11B68, Secondary: 05A15}

\begin{document}

\maketitle

\begin{abstract}
Faulhaber's formula expresses the sum of the first $n$ positive integers, each raised to an integer power $p\geq 0$, as a polynomial in $n$ of degree $p+1$. Ramanujan expressed this sum for $p\in\{\frac12,\frac32,\frac52,\frac72\}$ as the sum of a polynomial in $\sqrt{n}$ and a certain infinite series. In the present work, we explore the connection to Bernoulli polynomials, and by generalizing those to formal series, we extend the Ramanujan result to all positive half-integers $p$.
\end{abstract}

\section{Introduction}

Let $\mathbb{Z}_{\geq0}$ denote the set of nonnegative integers, and $B_n$ denote the $n$-th Bernoulli number, with $B_1 = -\frac12$. For $n,p \in \mathbb{Z}_{\geq0}$, Faulhaber's formula~\cite[Proposition 9.2.12]{Cohen07v2} states that 
\begin{equation}
\sum_{i=1}^n i^p = \frac1{p+1}\sum_{i=0}^p \binom{p+1}{i} (-1)^i B_i n^{p+1-i}.
\end{equation}
A natural question is whether there exist similar formulae for fractional powers $p$.

The study of this question for half-integer powers was pioneered by Ramanujan~\cite{ram}, who introduced a function
\begin{equation}
\tau(n,m):=\sum_{\nu=0}^{\infty} \dfrac{1}{(\sqrt{n+\nu}+\sqrt{n+\nu+1})^m} \quad 
(n,m\in \mathbb{Z}_{\geq0},\ m\geq 3)
\end{equation}
and proved existence of constants $C_1, C_3, C_5, C_7$ such that
\begin{align}
\sum_{i=1}^ni^{\frac{1}{2}}& = C_1 + \dfrac{2}{3}n^\frac{3}{2}+\dfrac{1}{2}n^\frac{1}{2} + \dfrac{1}{6}\tau(n,3),\label{ram12}\\
\sum_{i=1}^ni^{\frac{3}{2}}& = C_3 + \dfrac{2}{5}n^\frac{5}{2}+\dfrac{1}{2}n^\frac{3}{2} + \dfrac{1}{8}n^\frac{1}{2} + \dfrac{1}{40}\tau(n,5),\label{ram32}\\
\sum_{i=1}^ni^{\frac{5}{2}}& = C_5 + \dfrac{2}{7}n^\frac{7}{2}+\dfrac{1}{2}n^\frac{5}{2} + \dfrac{5}{24}n^\frac{3}{2} - \dfrac{1}{96}\tau(n,3)+\dfrac{1}{224}\tau(n,7),\label{ram52}\\
\sum_{i=1}^ni^{\frac{7}{2}}& = C_7 + \dfrac{2}{9}n^\frac{9}{2}+\dfrac{1}{2}n^\frac{7}{2} + \dfrac{7}{24}n^\frac{5}{2} - \dfrac{7}{384}n^\frac{1}{2}-\dfrac{1}{192}\tau(n,5)+\dfrac{1}{1152}\tau(n,9).\label{ram72}
\end{align}

Following Ramanujan, we reproduce a proof of \eqref{ram12}. We consider the function on $\mathbb{Z}_{\geq0}$ defined by
\begin{equation}\label{phi1}
\phi_1(n):=\sum_{i=1}^n\sqrt{i} - C_1 - \dfrac{2}{3}n^\frac32-\dfrac{1}{2}n^\frac12 - \dfrac{1}{6}\tau(n,3),
\end{equation}
where $C_1$ is defined so that $\phi_1(1) = 0$. We have 
\begin{multline}
\phi_1(n) - \phi_1(n+1) = -\sqrt{n+1}+\dfrac{2}{3}((n+1)^\frac{3}{2}-n^\frac{3}{2})+\dfrac{1}{2}(\sqrt{n+1}-\sqrt{n}) - \dfrac{1}{6}\dfrac{1}{(\sqrt{n}+\sqrt{n+1})^3}\\
= -\sqrt{n+1}+\dfrac{2}{3}((n+1)^\frac{3}{2}-n^\frac{3}{2})+\dfrac{1}{2}(\sqrt{n+1}-\sqrt{n}) + \dfrac{1}{6}(\sqrt{n}-\sqrt{n+1})^3 = 0,   
\end{multline}
so that $\phi_1$ is identically zero. The identities \eqref{ram32}-\eqref{ram72} can be proved in a similar way. Ramanujan remarks that \emph{`the corresponding results for higher powers are not so neat as the previous ones,'} and provides \eqref{ram52} and \eqref{ram72} as examples~\cite[page 174]{ram}. The purpose of this article is to elucidate the general case of Ramanujan's results. 

We remark that there are few results that extend Faulhaber's formula to noninteger powers. For example, \cite{shek} estimates $\sum_{i=1}^ni^{\frac{1}{r}} - \frac{r}{r+1}(n+1)^{\frac{1+r}{r}}+\frac{1}{2}(n+1)^{\frac{1}{r}}$, while \cite[Proposition 9.2.13]{Cohen07v2} and \cite{gown} prove asymptotic formulas for a complex power. Our approach is based on generalization of Bernoulli polynomials to formal power series, establishing certain fundamental properties of the latter. A different generalization of Bernoulli polynomials was proposed in \cite[Section 9]{knuth1993}, resulting in an infinite-series generalization of Faulhaber's formula. In contrast, our result gives an exact and finite formula in the case of a positive half-integer power.

\section{Main Result}

Our main result generalizing Ramanujan's identities \eqref{ram12}-\eqref{ram72} is given by the following theorem.

\begin{theorem}\label{th:main}
For any odd positive integer $k$, we have
\begin{equation}\label{e1}
\sum_{i=1}^n i^\frac{k}{2} = C_k + \sqrt{n}\,P_k(n) + \sum_{i=3\atop i\text{ is odd}}^{k+2} A^k_i\,\tau(n,i),
\end{equation}
where 
\begin{itemize}
    \item 
$P_k(n)$ is a polynomial in $n$:
\begin{equation}\label{Pkn}
P_k(n) :=  \frac1{\frac{k}2+1}\sum_{i=0}^{\frac{k+1}{2}} \binom{\frac{k}{2}+1}{i} (-1)^i B_i n^{\frac{k+1}{2}-i},
\end{equation}
\item $A^k_i$ ($i\in\{1,3,5,\dots,k+2\}$) are constant coefficients given by
\begin{equation}\label{Aki}
A^k_i := \frac{1}{(\frac{k}{2}+1)2^{i-1}}\sum_{j=0}^{\frac{k-i}{2}+1}\binom{2j+i}{j}\frac{1}{4^j}\binom{\frac{k}{2}+1}{\frac{k-i}{2}+1-j}B_{\frac{k-i}{2}+1-j},
\end{equation}
\item $C_k$ is a constant given by:
\begin{equation}\label{Ck}
C_k := 1 - P_k(1) - \sum_{i=3\atop i\text{ is odd}}^{k+2} A^k_i\,\tau(1,i).
\end{equation}
\end{itemize}
\end{theorem}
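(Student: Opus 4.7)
The plan is to adapt Ramanujan's telescoping argument from \eqref{phi1} to arbitrary odd $k$. Set
\[
\phi_k(n) := \sum_{i=1}^n i^{k/2} - C_k - \sqrt{n}\,P_k(n) - \sum_{\substack{i=3\\ i\text{ odd}}}^{k+2} A^k_i\,\tau(n,i),
\]
so that the choice of $C_k$ in \eqref{Ck} forces $\phi_k(1)=0$. It then suffices to prove $\phi_k(n+1)=\phi_k(n)$ for every $n\ge 1$. Using $\tau(n,i)-\tau(n+1,i)=(\sqrt{n+1}-\sqrt{n})^i$, which follows from $(\sqrt{n+1}+\sqrt{n})(\sqrt{n+1}-\sqrt{n})=1$, this step is equivalent to the algebraic identity
\begin{equation}\label{eq:pl-key}
(n+1)^{k/2} \;=\; \sqrt{n+1}\,P_k(n+1) - \sqrt{n}\,P_k(n) - \sum_{\substack{i=3\\ i\text{ odd}}}^{k+2} A^k_i\,(\sqrt{n+1}-\sqrt{n})^i.
\end{equation}

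I would verify \eqref{eq:pl-key} via the hyperbolic parametrization $\sqrt{n}=\sinh t$, $\sqrt{n+1}=\cosh t$ (for $t>0$), which linearizes the square roots through $\sqrt{n+1}\pm\sqrt{n}=e^{\pm t}$. Setting $H(u):=u\,P_k(u^2)$, an odd polynomial in $u$ of degree $k+2$, \eqref{eq:pl-key} becomes
\[
\cosh^k t \;=\; H(\cosh t) - H(\sinh t) - \sum_{\substack{i=3\\ i\text{ odd}}}^{k+2} A^k_i\,e^{-it},
\]
an identity between Laurent polynomials in $e^t$ with exponents in $\{-(k+2),\dots,k+2\}$ and supported on odd exponents only. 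Splitting by the parity $t\leftrightarrow -t$ yields the equivalent pair
\[
\cosh^k t = H(\cosh t) - \sum_i A^k_i\,\cosh(it), \qquad H(\sinh t) = \sum_i A^k_i\,\sinh(it),
\]
which are Chebyshev expansions since $\cosh(it)=T_i(\cosh t)$ and $\sinh(it)=\sinh t\cdot U_{i-1}(\cosh t)$; this is the source of the Chebyshev keyword, the second identity expressing $P_k(\sinh^2 t)$ as a linear combination of $U_{i-1}(\cosh t)$ for odd $i\in\{3,\dots,k+2\}$.

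I then verify the identity coefficient-by-coefficient by extracting $[e^{jt}]$ for each odd $j\in\{-(k+2),\dots,k+2\}$. For positive odd $j\le k$, no $A^k_i$ contributes, and matching $[e^{jt}]\cosh^k t$ with $[e^{jt}](H(\cosh t)-H(\sinh t))$ reduces, after binomial expansion and reindexing, to a Bernoulli-number identity---the natural formal-series extension of $\sum_{r=0}^{p}\binom{p+1}{r}B_r=0$ to a half-integer upper parameter $p+1=k/2+1$. This is the \emph{formal Bernoulli series} mechanism advertised in the abstract, and the form \eqref{Pkn} of $P_k$ is precisely what encodes these cancellations. The top frequency $j=k+2$ cancels because the leading monomials of $\cosh^{k+2}t$ and $\sinh^{k+2}t$ contribute equally to $[e^{(k+2)t}]$. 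The missing negative frequency $i=1$ requires the additional vanishing $[e^{-t}](H(\cosh t)-H(\sinh t)-\cosh^k t)=0$, equivalent to $[e^t]H(\sinh t)=0$; this is another formal-Bernoulli identity enforced by \eqref{Pkn} (in the Chebyshev picture, it is equivalent to $P_k(\sinh^2 t)$ lying in the span of $\{U_{i-1}(\cosh t)\}_{i\text{ odd},\,3\le i\le k+2}$).

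For negative odd $i\in\{3,\dots,k+2\}$ the coefficient equation uniquely determines $A^k_i=[e^{-it}](H(\cosh t)-H(\sinh t)-\cosh^k t)$. The main obstacle is evaluating this expression in the closed form \eqref{Aki}. Expanding $\cosh^{k+2-2r}t$ and $\sinh^{k+2-2r}t$ inside $H$ via the binomial theorem and collecting contributions at net frequency $-i$ yields a double sum in which an inner index $j$ counts how many pairs of $e^{\pm t}$ factors cancel; the resulting multiplicity is the central-binomial-like ratio $\binom{2j+i}{j}/4^j$ (the source of the Catalan keyword), and the remaining outer factor is $\binom{k/2+1}{(k-i)/2+1-j}\,B_{(k-i)/2+1-j}$, matching \eqref{Aki} exactly. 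Once all coefficients are verified, \eqref{eq:pl-key} holds as an identity in $t$, hence for all $n\ge 1$, and the proof is complete.
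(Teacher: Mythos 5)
Your setup is the same as the paper's: the telescoping function $\phi_k$ with $\phi_k(1)=0$, the substitution $\sqrt{n}=\sinh t$, $\sqrt{n+1}=\cosh t$, and the split into the two Chebyshev identities are exactly Lemma~\ref{lem:sqrt_power_m} and Lemma~\ref{lem:sumATU}. The gap is in the step you dismiss as routine bookkeeping. Writing $H(u)=uP_k(u^2)$ and $G(t):=H(\cosh t)-H(\sinh t)-\cosh^k t$, and carrying out your own extraction honestly (using $[e^{-it}]\cosh^{2j+i}t=2^{-i}4^{-j}\binom{2j+i}{j}$, $[e^{-it}]\sinh^{2j+i}t=(-1)^{j+i}2^{-i}4^{-j}\binom{2j+i}{j}$, and the fact that $(-1)^rB_r-B_r$ is nonzero only at $r=1$, where its contribution reproduces $[e^{-it}]\cosh^k t$), one finds
\[
[e^{-it}]\,G(t)=\frac{1+(-1)^{\frac{k-i}{2}+1}}{2}\,A^k_i,
\qquad
[e^{jt}]\,G(t)=\frac{1-(-1)^{\frac{k-j}{2}+1}}{2}\,A^k_j\quad(j>0),
\]
with $A^k_i$ exactly as in \eqref{Aki}. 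So the negative-frequency extraction does \emph{not} ``match \eqref{Aki} exactly,'' and the positive frequencies do \emph{not} cancel for free: the identity $G(t)=\sum_{i\geq 3}A^k_i e^{-it}$ is \emph{equivalent} to the vanishing statements $A^k_i=0$ whenever $\frac{k-i}{2}$ is even, together with $A^k_1=0$. These are genuinely nontrivial cancellations (for instance $A^9_5=0$ amounts to $693-1188+495=0$ after clearing denominators), and they are precisely the content of Theorem~\ref{th:gf_A_2}, whose proof in the paper rests on the duality Theorem~\ref{th:B_duality} (Lagrange inversion plus the polynomial-in-$\alpha$ extension argument).

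Your appeal to ``the natural formal-series extension of $\sum_{r=0}^{p}\binom{p+1}{r}B_r=0$ to a half-integer upper parameter'' does not fill this hole: that recurrence depends on $\binom{p+1}{r}$ vanishing for $r>p+1$, which fails for half-integer $\alpha$, so there is no automatic analogue, and the truncated sums $\sum_r\binom{\alpha}{r}B_r(\cdots)$ that arise here do not vanish term-by-term for any elementary reason. What is actually needed is the identity $[z^m]\,h(z)\mathcal{B}_\alpha(z)=[z^m]\,h(\frac{z}{1+z})(1+z)^{m-1-\alpha}\mathcal{B}_\alpha(-z)$ of Theorem~\ref{th:B_duality} (or an equivalent), applied with the Catalan-type weight $h(z)=\mathcal{C}(\frac z4)^i(1-z)^{-1/2}$, plus a separate argument for $A^k_1$. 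In short: your proposal is a correct reduction to the same two Chebyshev identities the paper proves, but the core of the proof --- the Bernoulli-series identities forcing the required vanishings --- is asserted rather than proved, and the specific assertion that the residue computation yields \eqref{Aki} directly is false without them.
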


To illustrate how Ramanujan's identities \eqref{ram12}-\eqref{ram72} continue, we list
the numerical values of the coefficients $A^k_i$ for $k\in\{9,11,13,15\}$ in 
Table~\ref{tab:Aki}.
The apparent pattern of zeros among the values of $A^k_i$ is established in Theorem~\ref{th:gf_A_2} in the next section. Together with Theorem~\ref{th:gf_A_1} it also provides the generating functions for the coefficients $A^k_i$ and polynomials $P_k(n)$.

\begin{table}[!tb]
\begin{center}
{\renewcommand{\arraystretch}{1.5}%
    \begin{tabular}{r|ccccccccc}
       $k$ & $A^k_1$ & $A^k_3$ & $A^k_5$ & $A^k_7$ & $A^k_9$ & $A^k_{11}$ & $A^k_{13}$ & $A^k_{15}$ & $A^k_{17}$ \\
       \hline\hline
       9  & $0$ & $\frac1{256}$ & $0$ & $-\frac1{512}$ & $0$ & $\frac1{5632}$ &&& \\
       \hline
       $11$  & $0$ & $0$ & $\frac{33}{10240}$ & $0$ & $-\frac1{1536}$ & $0$ & $\frac1{26624}$ && \\
       \hline
       $13$  & $0$ & $-\frac{143}{40960}$ & $0$ & $\frac{221}{122880}$ & $0$ & $-\frac5{24576}$ & $0$ & $\frac1{122880}$ & \\
       \hline
       $15$  & $0$ & $0$ & $-\frac{65}{16384}$ & $0$ & $\frac{41}{49152}$ & $0$ & $-\frac1{16384}$ & $0$ & $\frac1{557056}$ \\
       \hline
    \end{tabular}
}
\end{center}
\caption{Values of $A^k_i$ for $k\in\{9,11,13,15\}$.}
    \label{tab:Aki}
\end{table}

\section{Connection to formal Bernoulli series}

For any real number $\alpha$, we define the \emph{Bernoulli series} ${\cal B}_\alpha(z)$ by
\begin{equation}\label{Bseries}
{\cal B}_\alpha(z) := \sum_{i=0}^\infty \binom{\alpha}{i} B_i z^i.
\end{equation}
When $\alpha=m$ is a nonnegative integer, ${\cal B}_m(z)$ represents a polynomial expressed in terms of the \emph{Bernoulli polynomial} ${\tilde {\cal B}}_m(z) := \sum_{i=0}^m \binom{m}{i} B_i z^{m - i}$ as
\begin{equation}\label{recipBP}
{\cal B}_m(z) = z^m {\tilde {\cal B}}_m(\frac1{z}), \qquad (m\in\mathbb{Z}_{\geq0})
\end{equation}
which is called the \emph{reciprocal Bernoulli polynomial}~\cite{Kellner2023}.

Since Bernoulli numbers with odd indices are zero, except for $B_1=-\frac12$, we easily derive
\begin{equation}\label{eq:B_minus_z}
{\cal B}_\alpha(-z) = {\cal B}_\alpha(z) + \alpha z.
\end{equation}
The Bernoulli series further satisfy the following property:

\begin{theorem}\label{th:B_duality}
For any power series $h(z)$, real number $\alpha$, and integer $m$,
$$\coeff{z^m} h(z) {\cal B}_\alpha(z) = \coeff{z^m} h(\frac{z}{1+z})(1+z)^{m-1-\alpha} {\cal B}_\alpha(-z),$$
where $\coeff{z^m}$ denotes the operator producing the coefficient of $z^m$.
\end{theorem}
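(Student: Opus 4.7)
My plan is to reduce the identity to two ingredients. The first is a functional equation
$$(1+z)^\alpha\,{\cal B}_\alpha\!\left(\tfrac{z}{1+z}\right) \;=\; {\cal B}_\alpha(-z)\qquad(\star)$$
for the formal Bernoulli series. The second is the elementary coefficient-extraction identity
$$[z^m]\,g(z)\;=\;[z^m]\,(1+z)^{m-1}\,g\!\left(\tfrac{z}{1+z}\right)\qquad(\dagger)$$
valid for every formal power series $g(z)$ and every integer $m$. Granted these, the theorem follows in one step: using $(\star)$ to rewrite ${\cal B}_\alpha(-z)=(1+z)^\alpha{\cal B}_\alpha(\tfrac{z}{1+z})$ turns the right-hand side of the theorem into $[z^m]\,(1+z)^{m-1}\,g(\tfrac{z}{1+z})$ with $g(z):=h(z){\cal B}_\alpha(z)$, and $(\dagger)$ identifies this with $[z^m]\,g(z)$, the left-hand side.

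For $(\dagger)$, I would expand $g(z)=\sum_j g_j z^j$ and compute each contribution $[z^m]\,z^j(1+z)^{m-1-j}=\binom{m-1-j}{m-j}$. This vanishes for $0\le j<m$ (the descending product contains the factor $0$) and for $j>m$ (the formal series $(1+z)^{m-1-j}$ has no negative-power terms); at $j=m$ it equals $\binom{-1}{0}=1$, isolating $g_m$.

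For $(\star)$, I would expand both sides and compare coefficients of $z^n$. Using the identity $\binom{\alpha}{i}\binom{\alpha-i}{n-i}=\binom{\alpha}{n}\binom{n}{i}$ (valid for arbitrary real $\alpha$), the left-hand $z^n$-coefficient collapses to $\binom{\alpha}{n}\sum_{i=0}^n\binom{n}{i}B_i=\binom{\alpha}{n}{\bar{\cal B}}_n(1)$; since ${\bar{\cal B}}_n(1)=B_n$ for $n\neq 1$ and ${\bar{\cal B}}_1(1)=\tfrac12=-B_1$, this matches $(-1)^n\binom{\alpha}{n}B_n=[z^n]\,{\cal B}_\alpha(-z)$ in all three cases ($n$ even; $n\ge 3$ odd, where both sides vanish via $B_n=0$; and $n=1$, where both sides equal $\alpha/2$). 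The main obstacle is precisely this $n=1$ case: the exceptional value $B_1=-\tfrac12$ is what forces ${\cal B}_\alpha(-z)$ (rather than ${\cal B}_\alpha(z)$) to appear on the right of $(\star)$ and in the theorem itself, mirroring the identity~\eqref{eq:B_minus_z} already noted in the paper.
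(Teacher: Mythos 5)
Your proposal is correct, and it reaches the theorem by a genuinely different route than the paper. The paper first observes that each coefficient identity is polynomial in $\alpha$, reduces to positive integer $\alpha$, derives the functional equation ${\cal B}_\alpha(-\tfrac{z}{1-z})=(1-z)^{-\alpha}{\cal B}_\alpha(z)$ from the classical translation/reflection property of Bernoulli polynomials, and then invokes the Lagrange--B\"urmann formula with $\phi(z)=1+z$. Your $(\star)$ is exactly that functional equation after the substitution $z\mapsto -z$, but you prove it directly for arbitrary real $\alpha$ by coefficient comparison, using the trinomial revision $\binom{\alpha}{i}\binom{\alpha-i}{n-i}=\binom{\alpha}{n}\binom{n}{i}$ and the recurrence $\sum_{i=0}^n\binom{n}{i}B_i=B_n$ ($n\neq 1$); this bypasses the polynomiality-in-$\alpha$ reduction entirely. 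You also replace Lagrange inversion by the elementary identity $(\dagger)$, whose direct verification via $[z^{m-j}](1+z)^{m-1-j}=\binom{m-1-j}{m-j}=[j=m]$ is complete (the $j>m$ terms vanish because $z^j(1+z)^{m-1-j}$ has order $j$, so the relevant sum is finite and the interchange is legitimate). What each approach buys: the paper's proof is shorter given the cited machinery and reuses \eqref{Bz1z} and Lagrange inversion elsewhere in spirit; yours is self-contained, works uniformly in real $\alpha$ from the start, and isolates exactly why ${\cal B}_\alpha(-z)$ rather than ${\cal B}_\alpha(z)$ must appear --- the $n=1$ anomaly $\bar{\cal B}_1(1)=-B_1$.
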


\begin{proof}
Since the identity is formed by polynomials in $\alpha$ (of degree at most $m$), it is sufficient to show that it holds for any positive integer $\alpha$.

For a positive integer $\alpha$, the representation~\eqref{recipBP} together with \cite[Proposition 9.1.3]{Cohen07v2} (see also \cite[formula 24.4.3]{DLMF})
implies that
\begin{equation}\label{Bz1z}
{\cal B}_\alpha(-\frac{z}{1-z}) = \big(-\frac{z}{1-z}\big)^\alpha  {\tilde {\cal B}}_\alpha(1-\frac1z) = \big(\frac{z}{1-z}\big)^\alpha {\tilde {\cal B}}_\alpha(\frac1z) = (1-z)^{-\alpha} {\cal B}_\alpha(z).
\end{equation}

We use Lagrange inversion~\cite[formula 2.1.8]{Gessel2016},
which for any power series $H(z)$ and $\phi(z)$ with $\phi(0)\neq 0$, and any integer $m$ states: 
\begin{equation}\label{lb}
\coeff{z^m} H(z)\phi(z)^m = \coeff{z^m} H(g(z))\frac{g'(z)z}{g(z)},
\end{equation}
where $g(z)$ is the compositional inverse of $\frac{z}{\phi(z)}$.
It implies the theorem statement by taking $\phi(z) := 1+z$ and $H(z) := h(\frac{z}{1+z})(1+z)^{-1-\alpha} {\cal B}_\alpha(-z)$, giving $g(z) = \frac{z}{1-z}$ and then by \eqref{Bz1z}
$$H(g(z))\frac{g'(z)z}{g(z)} = h(z)(1-z)^{1+\alpha} {\cal B}_\alpha(-\frac{z}{1-z})\frac1{1-z}=h(z){\cal B}_\alpha(z).$$
\end{proof}

We will find it useful to express the coefficients $A^k_i$ and polynomials $P_k(n)$ in terms of Bernoulli series as follows:

\begin{theorem}\label{th:gf_A_1}
For odd integers $k$ and $i$ with $1\leq i\leq k+2$, we have
$$
A^k_i = \frac1{(\frac{k}2+1)2^{k+1}} \coeff{z^{\frac{k-i}2+1}} (1+z)^{k+2} {\cal B}_{\frac{k}2+1}(\frac{4z}{(1+z)^2})
$$
and
$$
P_k(n) = \frac{1}{\frac{k}2+1} \coeff{z^\frac{k+1}2} (1-nz)^{-1} {\cal B}_{\frac{k}{2}+1}(-z) .
$$
\end{theorem}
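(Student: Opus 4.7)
My plan is to verify both formulas by direct expansion of the Bernoulli series and matching the coefficient extraction against the explicit sums in \eqref{Pkn} and \eqref{Aki}; nothing beyond the defining series \eqref{Bseries} is needed.

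For $P_k(n)$, I would expand
$${\cal B}_{\frac{k}{2}+1}(-z) = \sum_{i\geq 0}\binom{\frac{k}{2}+1}{i}(-1)^i B_i\,z^i,$$
multiply by the geometric series $(1-nz)^{-1}=\sum_{j\geq 0} n^j z^j$, and extract the coefficient of $z^{(k+1)/2}$. The resulting finite convolution sum runs over $0\leq i\leq \frac{k+1}{2}$ (this range arising purely from the coefficient extraction, not from any truncation of the Bernoulli series) and agrees termwise with $(\frac{k}{2}+1)P_k(n)$ as defined in \eqref{Pkn}. This step is a reformulation rather than a computation.

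For $A^k_i$ the idea is the same but the bookkeeping is heavier. Abbreviating $\alpha:=\frac{k}{2}+1$ and $m:=\frac{k-i}{2}+1$, I would expand
$$(1+z)^{k+2}\,{\cal B}_\alpha\!\left(\frac{4z}{(1+z)^2}\right) = \sum_{\ell\geq 0}\binom{\alpha}{\ell}B_\ell\,4^\ell\,z^\ell\,(1+z)^{2\alpha-2\ell}.$$
Only the range $0\leq \ell\leq m$ contributes to $[z^m]$; for such $\ell$ the exponent $2\alpha-2\ell=k+2-2\ell$ is a positive odd integer (since $\ell\leq m\leq \frac{k+1}{2}<\alpha$), so each $\binom{2\alpha-2\ell}{m-\ell}$ is an ordinary binomial coefficient. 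Taking $[z^m]$ then yields $\sum_{\ell=0}^{m}\binom{\alpha}{\ell}B_\ell\,4^\ell\binom{2\alpha-2\ell}{m-\ell}$. The substitution $j:=m-\ell$, combined with the numerical identity $2\alpha-2m=i$, transforms $\binom{2\alpha-2\ell}{m-\ell}$ into $\binom{2j+i}{j}$. Finally, collecting the overall prefactor $\frac{4^m}{2^{2\alpha-1}}=2^{1-i}$ reproduces the sum defining $A^k_i$ in \eqref{Aki} exactly.

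There is no conceptual obstacle here; the whole proof is a matching of coefficients. The single observation that makes the algebra click is $2\alpha-2m=i$, which is precisely what causes the binomial $\binom{2j+i}{j}$ to emerge from expanding powers of $1+z$. The only care required is verifying that the exponents $2\alpha-2\ell$ remain nonnegative integers throughout the contributing range, which is guaranteed by the elementary bound $\ell\leq m<\alpha$.
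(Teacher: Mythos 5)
Your proposal is correct and follows essentially the same route as the paper: both proofs are direct coefficient-matching between the explicit sums \eqref{Pkn}, \eqref{Aki} and the stated generating-function expressions, with your substitution $j=m-\ell$ being exactly the paper's reindexing $j\mapsto\frac{k-i}{2}+1-j$ and the identity $2\alpha-2m=i$ playing the same role in producing $\binom{2j+i}{j}$. The only difference is direction (you expand the generating function to recover the sum, the paper assembles the generating function from the sum), which is immaterial.
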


\begin{proof}
Substituting $j\mapsto \frac{k-i}{2}+1-j$ in \eqref{Aki}, we obtain the formula for $A^k_i$ as follows:
\[
\begin{split}
A^k_i &= \frac{1}{(\frac{k}{2}+1)2^{i-1}}\sum_{j=0}^{\frac{k-i}{2}+1}\binom{k+2-2j}{\frac{k-i}{2}+1-j}\frac{1}{4^{\frac{k-i}{2}+1-j}}\binom{\frac{k}{2}+1}{j}B_{j} \\
&= \frac{1}{(\frac{k}{2}+1)2^{k+1}}\sum_{j=0}^{\frac{k-i}{2}+1} 4^j\binom{\frac{k}{2}+1}{j}B_{j} \coeff{z^{\frac{k-i}{2}+1-j}} (1+z)^{k+2-2j} \\
&= \frac{1}{(\frac{k}{2}+1)2^{k+1}} \coeff{z^{\frac{k-i}{2}+1}}(1+z)^{k+2}\sum_{j=0}^{\frac{k-i}{2}+1} 4^j\binom{\frac{k}{2}+1}{j}B_{j} z^j (1+z)^{-2j} \\
&= \frac{1}{(\frac{k}{2}+1)2^{k+1}} \coeff{z^{\frac{k-i}{2}+1}} (1+z)^{k+2} {\cal B}_{\frac{k}2+1}(\frac{4z}{(1+z)^2}).
\end{split}
\]

The formula for $P_k(n)$ is derived from \eqref{Pkn} as follows:
\[
\begin{split}
P_k(n) &=  \frac1{\frac{k}2+1}\sum_{i=0}^{\frac{k+1}{2}} \binom{\frac{k}{2}+1}{i}n^{\frac{k+1}{2}-i} (-1)^i B_i \\
&= \frac1{\frac{k}2+1}\sum_{i=0}^{\frac{k+1}{2}} \binom{\frac{k}{2}+1}{i} (-1)^i B_i \coeff{z^{\frac{k+1}{2}-i}} (1-nz)^{-1} \\
&= \frac1{\frac{k}2+1} \coeff{z^{\frac{k+1}{2}}} (1-nz)^{-1}\sum_{i=0}^{\frac{k+1}{2}} \binom{\frac{k}{2}+1}{i} (-z)^i B_i  \\
&= \frac1{\frac{k}2+1} \coeff{z^{\frac{k+1}{2}}} (1-nz)^{-1} {\cal B}_{\frac{k}{2}+1}(-z).
\end{split}
\]
\end{proof}

We will be using the properties of the generating function for Catalan numbers~\cite[Section 5.4]{Graham}: 
\begin{equation}\label{eq:catalan}
{\cal C}(z) := \frac{1-(1-4z)^{1/2}}{2z},
\end{equation}
which we summarize in the following lemma.

\begin{lemma}\label{lem:catalan}
    The following identities hold:
    \begin{enumerate}[\normalfont(i)]
        \item $\binom{2j+i}{j}\frac{1}{4^j}=\coeff{z^j} {\cal C}(\frac{z}4)^i(1-z)^{-1/2}$;
        \item $\cal C(\frac{z}4)=2\frac{1-(1-z)^{1/2}}z$;
        \item $\cal C(\frac{z}{4(1+z)})=\cal C(-\frac{z}4)(1+z)^{1/2}$; 
        \item $\cal C(\frac{z}4)(1-z)^{-1/2} = 2\frac{(1-z)^{-1/2}-1}z = \frac{2-\cal C(\frac{z}4)}{1-z}$;
        \item $\cal C(\frac{z}4)^2\frac{z}4 = \cal C(\frac{z}4)-1$.
    \end{enumerate}
\end{lemma}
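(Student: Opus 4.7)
\emph{Plan.} The five identities split into an elementary group (ii)--(v), dispatched by algebraic manipulation of the closed form $\mathcal{C}(z) = (1-\sqrt{1-4z})/(2z)$ together with the defining quadratic $\mathcal{C}(z) = 1 + z\mathcal{C}(z)^2$, and the substantive identity (i), which I will derive from the Lagrange--B\"urmann formula \eqref{lb} already proved in the paper.

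For (ii), direct substitution $z\mapsto z/4$ in the closed form gives the claim; (v) is the defining quadratic instantiated at $w=z/4$. For (iv), starting from (ii) one computes $\mathcal{C}(z/4)(1-z)^{-1/2} = \frac{2(1-\sqrt{1-z})}{z\sqrt{1-z}} = \frac{2((1-z)^{-1/2}-1)}{z}$, and the second equality with $\frac{2-\mathcal{C}(z/4)}{1-z}$ follows by clearing denominators and simplifying $z - 1 + \sqrt{1-z} = -(1-z) + \sqrt{1-z}$. For (iii), applying (ii) to both sides — with the reduction $1-\frac{z}{1+z} = \frac{1}{1+z}$ on the left and $z\mapsto -z$ on the right — collapses each side to $\frac{2(1+z-\sqrt{1+z})}{z}$.

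The substantive step is (i). Differentiating $\mathcal{C} = 1 + z\mathcal{C}^2$ yields $\mathcal{C}'(1-2z\mathcal{C}) = \mathcal{C}^2$; squaring $1-2z\mathcal{C}$ and substituting $z\mathcal{C}^2 = \mathcal{C}-1$ gives the key identity $(1-2z\mathcal{C})^2 = 1 - 4z$, hence $\mathcal{C}'(z) = \mathcal{C}(z)^2/\sqrt{1-4z}$. Setting $g(z) := \mathcal{C}(z) - 1$, the defining quadratic rewrites as $g = z(1+g)^2$, so $g$ is the compositional inverse of $z/(1+z)^2$ — which corresponds to taking $\phi(z) = (1+z)^2$ in \eqref{lb}. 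Applying \eqref{lb} with $H(z) = (1+z)^i$ and $m = j$, the left side is $[z^j](1+z)^{2j+i} = \binom{2j+i}{j}$, while on the right the factor $g'(z)\,z/g(z)$ collapses via $g' = \mathcal{C}^2/\sqrt{1-4z}$ and $g = z\mathcal{C}^2$ to exactly $(1-4z)^{-1/2}$, yielding $[z^j]\mathcal{C}(z)^i(1-4z)^{-1/2} = \binom{2j+i}{j}$. Substituting $z\mapsto z/4$ (picking up the expected $1/4^j$) gives (i).

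The only delicate moment is the one-line verification $(1-2z\mathcal{C})^2 = 1 - 4z$: it is what turns the Lagrange residue $g'z/g$ into the precise $(1-z)^{-1/2}$ factor appearing in (i). Everything else — the substitutions in (ii)--(v) and the matching of binomial coefficients in (i) — is routine bookkeeping.
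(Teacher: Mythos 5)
Your proposal is correct. For parts (ii)--(v) you do exactly what the paper does: direct algebraic verification from the closed form \eqref{eq:catalan} and the quadratic $z\mathcal{C}(z)^2-\mathcal{C}(z)+1=0$; all of your computations check out (e.g.\ both sides of (iii) reduce to $\frac{2(1+z-\sqrt{1+z})}{z}$, and in (iv) the identity $\frac{\sqrt{1-z}-(1-z)}{1-z}=(1-z)^{-1/2}-1$ gives the second equality). The genuine divergence is in (i): the paper simply cites \cite[formula 5.72]{Graham} for the generalized binomial series identity $[z^j]\,\mathcal{C}(z)^i(1-4z)^{-1/2}=\binom{2j+i}{j}$, whereas you derive it from the Lagrange--B\"urmann formula \eqref{lb} with $\phi(z)=(1+z)^2$ and $H(z)=(1+z)^i$, using $g=\mathcal{C}-1=z\mathcal{C}^2$ and $1-2z\mathcal{C}=(1-4z)^{1/2}$ (your squaring argument pins this down since both sides are formal power series with constant term $1$) to collapse the residue factor $g'z/g$ to $(1-4z)^{-1/2}$. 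This buys a self-contained proof and, pleasingly, reuses the same tool the paper already deploys in Theorem~\ref{th:B_duality}, at the cost of a page of computation where the paper spends one citation; the one cosmetic slip is that the residue factor is $(1-4z)^{-1/2}$, becoming $(1-z)^{-1/2}$ only after the final substitution $z\mapsto z/4$, which you do carry out correctly.
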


\begin{proof}
The identity (i) can be seen in \cite[formula 5.72]{Graham}.
The others can be easily verified using the explicit formula~\eqref{eq:catalan} and/or the functional identity $z\cal C(z)^2 - \cal C(z) + 1 = 0$.
\end{proof}

In the following theorem, we express the coefficients $A^k_i$ in terms of $\cal C(z)$ and ${\cal B}_{\frac{k}2+1}(z)$, and establish cases where they are zero.

\begin{theorem}\label{th:gf_A_2}
For odd integers $k$ and $i$ with $1\leq i\leq k+2$, we have
$$
A^k_i = \frac1{(\frac{k}2+1)2^{i-1}} \coeff{z^{\frac{k-i}2+1}} {\cal C}(\frac{z}4)^i(1-z)^{-1/2}{\cal B}_{\frac{k}2+1}(z).
$$
Furthermore, $A^k_i=0$ whenever $i=1$ or $\frac{k-i}{2}$ is even.
\end{theorem}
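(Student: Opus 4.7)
The plan is to derive the generating-function expression for $A^k_i$ directly from the definition \eqref{Aki} via a Cauchy convolution, and then to read off the vanishing patterns using Theorem~\ref{th:B_duality}. Setting $m:=\frac{k-i}{2}+1$ and re-indexing the sum in \eqref{Aki}, the factor $\binom{2j+i}{j}/4^j$ is recognized by Lemma~\ref{lem:catalan}(i) as $[z^j]\,{\cal C}(\frac{z}{4})^i(1-z)^{-1/2}$, while $\binom{\frac{k}{2}+1}{m-j}B_{m-j}$ equals $[z^{m-j}]\,{\cal B}_{\frac{k}{2}+1}(z)$ by the definition \eqref{Bseries}. The convolution rule for the coefficients of a product of formal power series then collapses the double sum into the single coefficient extraction claimed in the first part of the theorem.

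For the vanishing when $\frac{k-i}{2}$ is even I apply Theorem~\ref{th:B_duality} with $h(z):={\cal C}(\frac{z}{4})^i(1-z)^{-1/2}$ and $\alpha:=\frac{k}{2}+1$. Using Lemma~\ref{lem:catalan}(iii) together with the identity $1-\frac{z}{1+z}=\frac{1}{1+z}$, a direct computation gives
\[
h\!\left(\tfrac{z}{1+z}\right)(1+z)^{m-1-\alpha}={\cal C}(-\tfrac{z}{4})^i(1+z)^{-1/2},
\]
so that the duality rewrites $A^k_i$ as $\frac{1}{(\frac{k}{2}+1)2^{i-1}}[z^m]\,{\cal C}(-\frac{z}{4})^i(1+z)^{-1/2}{\cal B}_\alpha(-z)$. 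But this new integrand is precisely $f(-z)$ for $f$ the integrand of the first formula, so its $[z^m]$-coefficient equals $(-1)^m$ times the original. This forces $A^k_i=(-1)^m A^k_i$, and hence $A^k_i=0$ whenever $m$ is odd, equivalently whenever $\frac{k-i}{2}$ is even.

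The vanishing $A^k_1=0$ is not captured by this parity argument when $m$ is even, so it demands a separate use of Theorem~\ref{th:B_duality}. Lemma~\ref{lem:catalan}(iv) rewrites ${\cal C}(\frac{z}{4})(1-z)^{-1/2}=2\bigl((1-z)^{-1/2}-1\bigr)/z$, whence
\[
A^k_1=\frac{2}{\frac{k}{2}+1}\Bigl([z^{m+1}]\,(1-z)^{-1/2}{\cal B}_\alpha(z)-[z^{m+1}]\,{\cal B}_\alpha(z)\Bigr),
\]
with $\alpha=\frac{k}{2}+1=m+\tfrac12$. Applying Theorem~\ref{th:B_duality} to the first extraction with $h(z)=(1-z)^{-1/2}$ at order $m+1$, the exponent $(m+1)-1-\alpha=-\tfrac12$ on $(1+z)$ exactly cancels the factor $(1+z)^{1/2}$ coming from $h(z/(1+z))$, giving $[z^{m+1}]\,(1-z)^{-1/2}{\cal B}_\alpha(z)=[z^{m+1}]\,{\cal B}_\alpha(-z)$. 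Since \eqref{eq:B_minus_z} shows that ${\cal B}_\alpha(-z)-{\cal B}_\alpha(z)=\alpha z$ is concentrated in degree $1$ and $m+1\geq 2$, the two extractions agree, so the two terms cancel and $A^k_1=0$. The main obstacle is to spot that the auxiliary factor $(1-z)^{-1/2}$ in the formula for $A^k_i$ is tuned precisely so that $(1+z)^{m-1-\alpha}$ in the duality collapses to $(1+z)^{-1/2}$; the identities of Lemma~\ref{lem:catalan} are calibrated to make these cancellations routine.
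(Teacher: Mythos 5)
Your proposal is correct and takes essentially the same route as the paper: the coefficient formula via a Cauchy convolution using Lemma~\ref{lem:catalan}(i) and the definition \eqref{Bseries}, the parity vanishing via Theorem~\ref{th:B_duality} together with Lemma~\ref{lem:catalan}(iii), and $A^k_1=0$ via Lemma~\ref{lem:catalan}(iv) and a second application of the duality. The only (harmless) deviation is in the last step, where you cancel $[z^{m+1}]\,(1-z)^{-1/2}{\cal B}_\alpha(z)$ against $[z^{m+1}]\,{\cal B}_\alpha(z)$ directly through \eqref{eq:B_minus_z}, whereas the paper first discards the latter coefficient using the vanishing of odd-index Bernoulli numbers; your variant is marginally cleaner since it does not need to restrict to the case $\frac{k+1}{2}$ even.
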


\begin{proof}
Using Lemma~\ref{lem:catalan}(i) and noting that $\binom{\frac{k}{2}+1}{\frac{k-i}{2}+1-j}B_{\frac{k-i}{2}+1-j}=\coeff{z^{\frac{k-i}{2}+1-j}} {\cal B}_{\frac{k}{2}+1}(z)$, we rewrite the formula~\eqref{Aki} as
\begin{equation}\label{eq:Aki_1}
A^k_i = \frac1{(\frac{k}2+1)2^{i-1}} \coeff{z^{\frac{k-i}2+1}} {\cal C}(\frac{z}4)^i(1-z)^{-1/2}{\cal B}_{\frac{k}2+1}(z).
\end{equation}

Applying Theorem~\ref{th:B_duality} for $m=\frac{k-i}2+1$ and $h(z)={\cal C}(\frac{z}4)^i(1-z)^{-1/2}$, we obtain another formula:
\[
\begin{split}
A^k_i &= \frac1{(\frac{k}2+1)2^{i-1}} \coeff{z^{\frac{k-i}2+1}} {\cal C}(\frac{z}{4(1+z)})^i(1+z)^{-1/2-i/2}{\cal B}_{\frac{k}2+1}(-z) \\
&=
\frac{(-1)^{\frac{k-i}2+1}}{(\frac{k}2+1)2^{i-1}} \coeff{z^{\frac{k-i}2+1}} {\cal C}(-\frac{z}{4(1-z)})^i(1-z)^{-1/2-i/2}{\cal B}_{\frac{k}2+1}(z)
\end{split}
\]
and by Lemma~\ref{lem:catalan}(iii),
\begin{equation}\label{eq:Aki_2}
A^k_i = \frac{(-1)^{\frac{k-i}2+1}}{(\frac{k}2+1)2^{i-1}} \coeff{z^{\frac{k-i}2+1}} {\cal C}(\frac{z}4)^i(1-z)^{-1/2}{\cal B}_{\frac{k}2+1}(z).
\end{equation}
Comparing the formulae \eqref{eq:Aki_1} and \eqref{eq:Aki_2}, we conclude that $A^k_i = 0$ whenever $\frac{k-i}2 + 1$ is odd.

It remains to prove that $A^k_1=0$ when $\frac{k+1}2$ is even. Using Lemma~\ref{lem:catalan}(iv) and noticing that $\coeff{z^{\frac{k+3}2}} {\cal B}_{\frac{k}2+1}(z)=0$, we simplify the formula~\eqref{eq:Aki_1} for $i=1$ as follows:
$$A^k_1 = \frac2{\frac{k}2+1} \coeff{z^{\frac{k+1}2}} \frac{(1-z)^{-1/2}-1}z{\cal B}_{\frac{k}2+1}(z) = \frac2{\frac{k}2+1} \coeff{z^{\frac{k+3}2}} (1-z)^{-1/2} {\cal B}_{\frac{k}2+1}(z).$$
From Theorem~\ref{th:B_duality} for $m=\frac{k+3}2$ and $h(z)=(1-z)^{-1/2}$, we have
$$
\coeff{z^{\frac{k+3}2}} (1-z)^{-1/2} {\cal B}_{\frac{k}2+1}(z) = \coeff{z^{\frac{k+3}2}} {\cal B}_{\frac{k}2+1}(-z) = 0,
$$
implying that $A^k_1=0$.
\end{proof}

\begin{theorem}\label{th:sumA_gf}
For an odd positive integer $k$ and any power series $F(z)$, the following two formulae hold:
\begin{align}
\sum_{i=1\atop i\text{ is odd}}^{k+2} A^k_i\cdot \coeff{z^{\frac{i-1}2}} F(z) &= 
\frac1{\frac{k}2+1} \coeff{z^{\frac{k+1}2}} {\cal B}_{\frac{k}2+1}(z) \frac{2-{\cal C}(\frac{z}4)}{1-z} F({\cal C}(\frac{z}4)-1) \tag{i}\\
&=
\frac1{\frac{k}2+1} \coeff{z^{\frac{k+1}2}} {\cal B}_{\frac{k}2+1}(-z) \frac{2-{\cal C}(-\frac{z}4)}{1+z} F(1-{\cal C}(-\frac{z}4)). \tag{ii}
\end{align}
\end{theorem}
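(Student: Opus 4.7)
Write $F(z) = \sum_{j \geq 0} f_j z^j$ and reparametrize the summation on the left by $i = 2j+1$, so that the LHS of (i) becomes $\sum_{j=0}^{(k+1)/2} f_j\,A^k_{2j+1}$. The plan is to substitute the coefficient-extraction formula for $A^k_{2j+1}$ furnished by Theorem~\ref{th:gf_A_2}, pull the sum over $j$ inside $[z^{\cdot}]$, and then recognize the resulting inner sum as $F$ evaluated at an algebraic function of $z$. The two auxiliary identities that finish the job are Lemma~\ref{lem:catalan}(v), which converts a power of $z\,{\cal C}(z/4)^2$ into the argument of $F$, and Lemma~\ref{lem:catalan}(iv), which simplifies the prefactor.

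Concretely, Theorem~\ref{th:gf_A_2} together with the elementary identity $[z^{m-j}]\,X(z) = [z^m]\,z^j X(z)$ rewrites
$$A^k_{2j+1} = \frac{1}{\frac{k}{2}+1}[z^{\frac{k+1}{2}}]\,(1-z)^{-1/2}{\cal B}_{\frac{k}{2}+1}(z)\,{\cal C}(\frac{z}{4})\,\Bigl[\frac{z}{4}{\cal C}(\frac{z}{4})^2\Bigr]^j.$$
Since $\frac{z}{4}{\cal C}(\frac{z}{4})^2$ has zero constant term, I may freely extend the range $0\le j\le \frac{k+1}{2}$ to all $j\ge 0$ (the additional terms contribute $0$ to $[z^{(k+1)/2}]$), obtaining
$$\sum_j f_j A^k_{2j+1} = \frac{1}{\frac{k}{2}+1}[z^{\frac{k+1}{2}}]\,(1-z)^{-1/2}{\cal B}_{\frac{k}{2}+1}(z)\,{\cal C}(\frac{z}{4})\,F\!\Bigl(\frac{z}{4}{\cal C}(\frac{z}{4})^2\Bigr).$$
Now Lemma~\ref{lem:catalan}(v) rewrites $\frac{z}{4}{\cal C}(\frac{z}{4})^2$ as ${\cal C}(\frac{z}{4}) - 1$, while Lemma~\ref{lem:catalan}(iv) rewrites ${\cal C}(\frac{z}{4})(1-z)^{-1/2}$ as $\frac{2-{\cal C}(z/4)}{1-z}$. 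Making both substitutions produces the right-hand side of (i).

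For part (ii) I run the same three steps starting from the alternative expression for $A^k_i$ that appears in the middle of the proof of Theorem~\ref{th:gf_A_2} (after simplification by Lemma~\ref{lem:catalan}(iii)):
$$A^k_i = \frac{1}{(\frac{k}{2}+1)2^{i-1}}[z^{\frac{k-i}{2}+1}]\,{\cal C}(-\frac{z}{4})^i(1+z)^{-1/2}{\cal B}_{\frac{k}{2}+1}(-z).$$
Each auxiliary identity has its natural $z\mapsto -z$ companion: Lemma~\ref{lem:catalan}(v) yields $\frac{z}{4}{\cal C}(-\frac{z}{4})^2 = 1 - {\cal C}(-\frac{z}{4})$, and Lemma~\ref{lem:catalan}(iv) applied at $-z$ yields ${\cal C}(-\frac{z}{4})(1+z)^{-1/2} = \frac{2-{\cal C}(-z/4)}{1+z}$. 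These substitutions produce exactly the right-hand side of (ii).

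There is no single hard step; the argument is essentially bookkeeping driven by two applications of Lemma~\ref{lem:catalan}. The only place that merits a brief justification is the harmless extension of the finite index range to all $j\ge 0$, which is valid because $\frac{z}{4}{\cal C}(\pm\frac{z}{4})^2 = O(z)$ makes the substituted series $F(\cdot)$ a legitimate formal power series in $z$ with only finitely many coefficients contributing to $[z^{(k+1)/2}]$.
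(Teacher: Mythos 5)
Your proof is correct and follows essentially the same route as the paper: part (i) is the identical computation (substitute the coefficient formula from Theorem~\ref{th:gf_A_2}, absorb the power-shift into the coefficient extraction, extend the index range, and close the sum with Lemma~\ref{lem:catalan}(iv,v)). For part (ii) the paper applies Theorem~\ref{th:B_duality} once to the completed right-hand side of (i), whereas you apply the duality earlier, at the level of each individual $A^k_i$ --- your intermediate formula is a correct consequence of the first display in the proof of Theorem~\ref{th:gf_A_2} combined with Lemma~\ref{lem:catalan}(iii), even though it is not written there verbatim --- and the two organizations are equivalent.
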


\begin{proof}
The formula (i) can be derived from Theorem~\ref{th:gf_A_2} and Lemma~\ref{lem:catalan}(iv,v) as follows:
\[
\begin{split}
\sum_{i=1\atop i\text{ is odd}}^{k+2} A^k_i\cdot \coeff{t^{\frac{i-1}2}} F(t) &=
\frac1{\frac{k}2+1} 
\sum_{i=1\atop i\text{ is odd}}^{k+2} \coeff{z^{\frac{k-i}2+1}} \frac1{2^{i-1}}{\cal C}(\frac{z}4)^i(1-z)^{-1/2}{\cal B}_{\frac{k}2+1}(z)\cdot \coeff{t^{\frac{i-1}2}} F(t) \\
&=
\frac1{\frac{k}2+1} \coeff{z^{\frac{k+1}2}} {\cal B}_{\frac{k}2+1}(z) (1-z)^{-1/2} {\cal C}(\frac{z}4)
\sum_{i=1\atop i\text{ is odd}}^{k+2} \bigg({\cal C}(\frac{z}4)^2\frac{z}4\bigg)^{\frac{i-1}2}\cdot \coeff{t^{\frac{i-1}2}} F(t) \\
&=
\frac1{\frac{k}2+1} \coeff{z^{\frac{k+1}2}} {\cal B}_{\frac{k}2+1}(z) \frac{2-{\cal C}(\frac{z}4)}{1-z}
 F({\cal C}(\frac{z}4)-1).
\end{split}
\]

Now, the formula (ii) follows from (i) by applying Theorem~\ref{th:B_duality} and Lemma~\ref{lem:catalan}(iii):
$$\coeff{z^{\frac{k+1}2}} {\cal B}_{\frac{k}2+1}(z) \frac{2-{\cal C}(\frac{z}4)}{1-z} F({\cal C}(\frac{z}4)-1)
= \coeff{z^{\frac{k+1}2}} {\cal B}_{\frac{k}2+1}(-z) \frac{2-{\cal C}(-\frac{z}4)}{1+z} F(1-{\cal C}(-\frac{z}4)).
$$
\end{proof}

\section{Proof of Theorem~\ref{th:main}}

We start by expressing $(\sqrt{n}-\sqrt{n+1})^m$ for an odd positive
integer $m$ in terms of Chebyshev polynomials of the first kind $T_k$ and the second kind $U_k$~\cite[Section 18.5]{DLMF}.

\begin{lemma}\label{lem:sqrt_power_m}
For an odd positive integer $m$,
\[
(\sqrt{n}-\sqrt{n+1})^m = \sqrt{n}\cdot U_{m-1}(\sqrt{n+1}) - \sqrt{n+1}\cdot \frac{T_m(\sqrt{n+1})}{\sqrt{n+1}},
\]
where $U_{m-1}(\sqrt{n+1})$ and $\frac{T_m(\sqrt{n+1})}{\sqrt{n+1}}$ represent polynomials in $n$, with the following generating functions:
\[
\begin{split}
{\cal T}_n(z) &:= \sum_{m=1\atop m \text{is odd}}^\infty T_m(\sqrt{n+1}) z^{\frac{m-1}2} = \sqrt{n+1} \frac{1 - z}{1 - 2(2n+1)z+z^2},\\
{\cal U}_n(z) &:= \sum_{m=1\atop m \text{is odd}}^\infty U_{m-1}(\sqrt{n+1}) z^{\frac{m-1}2}
=\frac{1+z}{1 - 2(2n+1)z + z^2}.
\end{split}
\]
\end{lemma}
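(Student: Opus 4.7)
The plan is to establish the algebraic identity first and then derive the two generating functions.

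For the identity, I would exploit the closed-form representation of Chebyshev polynomials in the conjugate surds $y := \sqrt{n+1}+\sqrt{n}$ and $y^{-1}=\sqrt{n+1}-\sqrt{n}$, for which $\tfrac12(y+y^{-1})=\sqrt{n+1}$ and $\tfrac12(y-y^{-1})=\sqrt{n}$. The standard formulas
\[
T_m(\sqrt{n+1}) = \tfrac{1}{2}(y^m+y^{-m}), \qquad U_{m-1}(\sqrt{n+1}) = \frac{y^m - y^{-m}}{y - y^{-1}} = \frac{y^m-y^{-m}}{2\sqrt{n}}
\]
give, by subtraction,
\[
T_m(\sqrt{n+1}) - \sqrt{n}\cdot U_{m-1}(\sqrt{n+1}) = y^{-m} = (\sqrt{n+1}-\sqrt{n})^m,
\]
and negating both sides (valid for odd $m$) produces the claimed identity. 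That $U_{m-1}(\sqrt{n+1})$ and $T_m(\sqrt{n+1})/\sqrt{n+1}$ are polynomials in $n$ follows from the parity observation that, for odd $m$, $T_m(x)$ contains only odd powers of $x$ and $U_{m-1}(x)$ only even powers, so after the indicated division each quantity becomes a polynomial in $n+1$.

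For the generating functions, I would start from the classical series $\sum_{m\geq 0} T_m(x) t^m = (1-xt)/(1-2xt+t^2)$ and $\sum_{m\geq 0} U_m(x) t^m = 1/(1-2xt+t^2)$, and isolate the odd part of the first and the even part of the second via the symmetrization $\tfrac12(f(t)\mp f(-t))$. Combining each over the common denominator $(1-2xt+t^2)(1+2xt+t^2) = 1 + 2(1-2x^2)t^2 + t^4$ gives
\[
\sum_{m\text{ odd}} T_m(x) t^m = \frac{xt(1-t^2)}{1+2(1-2x^2)t^2+t^4}, \qquad \sum_{m\text{ even}} U_m(x) t^m = \frac{1+t^2}{1+2(1-2x^2)t^2+t^4}.
\]
Dividing the $T$-series by $t$ to match the indexing $z^{(m-1)/2}$ against $t^m$, and substituting $t^2=z$, $x=\sqrt{n+1}$ (so that $1-2x^2=-(2n+1)$), yields the stated closed forms for ${\cal T}_n(z)$ and ${\cal U}_n(z)$. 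I do not anticipate a serious obstacle: the main identity collapses to a one-line cancellation once the Chebyshev closed-form in $y,y^{-1}$ is written down, and the generating-function derivation is a routine rational-function manipulation; the only point needing mild care is absorbing the extra factor of $t$ in the $T$-series before the substitution $t^2=z$.
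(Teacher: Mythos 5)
Your proof is correct and follows essentially the same route as the paper: your surd $y=\sqrt{n+1}+\sqrt{n}$ is exactly $e^{\beta}$ in the paper's hyperbolic parametrization $\sqrt{n}=\sinh(\beta)$, $\sqrt{n+1}=\cosh(\beta)$, so your cancellation $T_m-\sqrt{n}\,U_{m-1}=y^{-m}$ is the same identity the paper writes as $\sinh(m\beta)-\cosh(m\beta)$. The generating functions are likewise obtained by the same bisection of the classical Chebyshev series that the paper invokes, which you simply carry out explicitly.
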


\begin{proof}
Let us introduce a parameter $\beta$ such that $\sqrt{n}=\sinh(\beta)$. Then $\sqrt{n+1}=\cosh(\beta)$ and
$$
(\sqrt{n}-\sqrt{n+1})^m = (\sinh(\beta) - \cosh(\beta))^m = \sinh(m\beta) - \cosh(m\beta) 
= \sinh(\beta) U_{m-1}(\cosh(\beta)) - T_m(\cosh(\beta)).
$$
It remains to note that $\frac{T_m(z)}z$ and $U_{m-1}(z)$ are polynomials in $z^2$. The generating functions for $T_m(\sqrt{n+1})$ and $U_{m-1}(\sqrt{n})$ are easily obtained from the odd parts of the generating functions for Chebyshev polynomials (e.g., see \cite[formulae 18.12.8	and 18.12.10]{DLMF}).
\end{proof}

Generalizing Ramanujan's function $\phi_1(n)$ given by \eqref{phi1} for an odd positive integer $k$, we define
\begin{equation}
\phi_{k}(n) := \sum_{i=1}^n i^{\frac{k}{2}} - C_k - \sqrt{n}\,P_k(n) - \sum_{i=3\atop i\text{ is odd}}^{k+2} A^k_i\,\tau(n,i).
\end{equation}
The definition \eqref{Ck} implies that $\phi_k(1) = 0$. Then, using Lemma~\ref{lem:sqrt_power_m} and $A^k_1=0$ (Theorem~\ref{th:gf_A_2}), we have
\begin{align*}
   &\phi_k(n)-\phi_k(n+1) = -(n+1)^{\frac{k}{2}} - \sqrt{n}P_k(n) + \sqrt{n+1}P_k(n+1)+\sum_{i=1\atop i\text{ is odd}}^{k+2} A^k_i(\sqrt{n}-\sqrt{n+1})^i\\ 
   &~=-(n+1)^{\frac{k}{2}} - \sqrt{n}P_k(n) + \sqrt{n+1}P_k(n+1)+\sqrt{n}\sum_{i=1\atop i\text{ is odd}}^{k+2} A^k_i\,\cdot U_{i-1}(\sqrt{n+1}) - \sum_{i=1\atop i\text{ is odd}}^{k+2} A^k_i\,T_i(\sqrt{n+1}). 
\end{align*}

As proved in Lemma~\ref{lem:sumATU} below, the last expression is identically zero, from which Theorem~\ref{th:main} follows instantly.

\begin{lemma}\label{lem:sumATU}
For any odd positive integer $k$, 
$$
\sum_{i=1\atop i\text{ is odd}}^{k+2} A^k_i\,T_i(\sqrt{n+1}) = \sqrt{n+1}P_k(n+1) -(n+1)^{\frac{k}{2}} 
$$  
and
$$
\sum_{i=1\atop i\text{ is odd}}^{k+2} A^k_i\,\cdot U_{i-1}(\sqrt{n+1}) = P_k(n).
$$
\end{lemma}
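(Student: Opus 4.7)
The plan is to express the Chebyshev sums as coefficient extractions using the generating functions $\mathcal{T}_n(z)$ and $\mathcal{U}_n(z)$ from Lemma~\ref{lem:sqrt_power_m}, then feed them through Theorem~\ref{th:sumA_gf}. Concretely, since $T_i(\sqrt{n+1}) = [z^{(i-1)/2}]\,\mathcal{T}_n(z)$ and $U_{i-1}(\sqrt{n+1}) = [z^{(i-1)/2}]\,\mathcal{U}_n(z)$ for odd $i$, we apply Theorem~\ref{th:sumA_gf}(i) with $F = \mathcal{T}_n$ to handle the first identity and Theorem~\ref{th:sumA_gf}(ii) with $F = \mathcal{U}_n$ for the second. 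Each choice is dictated by which version produces the cleaner simplification.

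The computational core is simplifying the inner expressions $\frac{2-\mathcal{C}(z/4)}{1-z}\mathcal{T}_n(\mathcal{C}(z/4)-1)$ and $\frac{2-\mathcal{C}(-z/4)}{1+z}\mathcal{U}_n(1-\mathcal{C}(-z/4))$. I would introduce $w := \mathcal{C}(z/4)-1$; by Lemma~\ref{lem:catalan}(v) this gives $z = 4w/(1+w)^2$ and hence $1-z = (1-w)^2/(1+w)^2$. The algebraic miracle that makes the whole scheme work is the identity
\[
1 - 2(2n+1)w + w^2 \;=\; (1-w)^2 - 4nw \;=\; (1+w)^2\bigl(1-(n+1)z\bigr),
\]
which lets the quadratic in the denominator of $\mathcal{T}_n$ cancel against the numerator factor $(1+w)^2$, yielding $\frac{2-\mathcal{C}(z/4)}{1-z}\mathcal{T}_n(w) = \sqrt{n+1}/(1-(n+1)z)$. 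An analogous substitution $w' := 1-\mathcal{C}(-z/4)$, giving $z = 4w'/(1-w')^2$ and $1 - 2(2n+1)w' + w'^2 = (1-w')^2(1-nz)$, collapses the $\mathcal{U}_n$ expression all the way down to $1/(1-nz)$.

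Once these simplifications are in hand, the rest is coefficient bookkeeping against Theorem~\ref{th:gf_A_1}. For the $U$-identity, the extraction $\frac{1}{k/2+1}[z^{(k+1)/2}]\,\mathcal{B}_{k/2+1}(-z)/(1-nz)$ is literally the generating-function formula for $P_k(n)$, so that side is immediate. For the $T$-identity, we land on $\frac{\sqrt{n+1}}{k/2+1}[z^{(k+1)/2}]\,\mathcal{B}_{k/2+1}(z)/(1-(n+1)z)$, which involves $\mathcal{B}_{k/2+1}(z)$ rather than $\mathcal{B}_{k/2+1}(-z)$; the rescue is the sign-swap identity \eqref{eq:B_minus_z}, rewriting $\mathcal{B}_{k/2+1}(z) = \mathcal{B}_{k/2+1}(-z) - (k/2+1)z$. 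The first piece contributes exactly $\sqrt{n+1}\,P_k(n+1)$, while the subtracted piece contributes $-\sqrt{n+1}\cdot(n+1)^{(k-1)/2} = -(n+1)^{k/2}$, completing the identity.

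The only genuinely nontrivial step is guessing (and then verifying) the factorization $1 - 2(2n+1)w + w^2 = (1+w)^2(1-(n+1)z)$ under the Catalan substitution; every other step is forced by Theorems~\ref{th:gf_A_1} and~\ref{th:sumA_gf}. The asymmetry between the two identities — that one uses variant (i) and the other variant (ii) of Theorem~\ref{th:sumA_gf} — is a feature, not a bug: the $T$-sum naturally produces a denominator $1-(n+1)z$ (matching $P_k(n+1)$) while the $U$-sum produces $1-nz$ (matching $P_k(n)$), precisely as required by the statement.
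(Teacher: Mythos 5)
Your proposal is correct and follows essentially the same route as the paper: apply Theorem~\ref{th:sumA_gf}(i) with $F=\mathcal{T}_n$ and (ii) with $F=\mathcal{U}_n$, simplify the composed generating functions to $\sqrt{n+1}\,(1-(n+1)z)^{-1}$ and $(1-nz)^{-1}$ respectively, and finish with \eqref{eq:B_minus_z} and Theorem~\ref{th:gf_A_1}. The only difference is that you spell out the substitution $w=\mathcal{C}(z/4)-1$ and the factorization $1-2(2n+1)w+w^2=(1+w)^2(1-(n+1)z)$, which the paper leaves as ``easy to verify.''
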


\begin{proof}
Using the expression for ${\cal T}_n$ given in Lemma~\ref{lem:sqrt_power_m}, it is easy to verify that
$$\frac{2-{\cal C}(\frac{z}4)}{1-z} {\cal T}_n({\cal C}(\frac{z}4)-1) = \sqrt{n+1}\ (1-(n+1)z)^{-1},$$
and thus Theorem~\ref{th:sumA_gf}(i) for $F = {\cal T}_n$, formula~\eqref{eq:B_minus_z}, and Theorem~\ref{th:gf_A_1} imply that
\[
\begin{split}
\sum_{i=1\atop i\text{ is odd}}^{k+2} A^k_i\,T_i(\sqrt{n+1}) &=
\frac{\sqrt{n+1}}{\frac{k}2+1} \coeff{z^{\frac{k+1}2}} {\cal B}_{\frac{k}2+1}(z) (1-(n+1)z)^{-1} \\
&= \frac{\sqrt{n+1}}{\frac{k}2+1} \coeff{z^{\frac{k+1}2}} \big({\cal B}_{\frac{k}2+1}(-z) - (\frac{k}2+1)z\big) (1-(n+1)z)^{-1} \\
&= {\sqrt{n+1}}P_k(n+1) - (n+1)^{\frac{k}2}.
\end{split}
\]

For the expression of ${\cal U}_n$ in Lemma~\ref{lem:sqrt_power_m}, we can similarly verify that
$$\frac{2-{\cal C}(-\frac{z}4)}{1+z} {\cal U}_n(1-{\cal C}(-\frac{z}4)) = (1-nz)^{-1},$$
and then Theorem~\ref{th:sumA_gf}(ii) for $F = {\cal U}_n$ and Theorem~\ref{th:gf_A_1} imply that
$$
\sum_{i=1\atop i\text{ is odd}}^{k+2} A^k_i\,U_{i-1}(\sqrt{n+1}) =
\frac1{\frac{k}2+1} \coeff{z^{\frac{k+1}2}} {\cal B}_{\frac{k}2+1}(-z) (1-nz)^{-1} = P_k(n).
$$
\end{proof}

\section*{Acknowledgments} The last four authors were partly supported by the National Science Foundation grant 1820731. The last author was additionally supported by the Professional Staff Congress-City University of New York grant \emph{`On a generalization of a result of Ramanujan on the sum of certain roots of natural numbers'}.

\bibliographystyle{plainurl}
\bibliography{refs}

\end{document}